\theoremstyle{plain}
\newtheorem{thm}[subsection]{Theorem}
\newtheorem{prop}[subsection]{Proposition}
\newtheorem{lem}[subsection]{Lemma}
\theoremstyle{definition}
\newtheorem{rem}[subsection]{Remark}
\newtheorem{para}[subsection]{}
\newenvironment{pf}{\proof[\proofname]}{\endproof}
\numberwithin{equation}{subsection}
\begin{document}
\title
{Geometric polarized log Hodge structures for semistable families}
\author{
Taro Fujisawa and Chikara Nakayama
}

\date{}
\maketitle

\newcommand\Cal{\mathcal}
\newcommand\define{\newcommand}

\define\bG{\bold G}
\define\bZ{\bold Z}
\define\bC{\bold C}
\define\bR{\bold R}
\define\bQ{\bold Q}
\define\bN{\bold N}
\define\bP{\bold P}
\define\cl{\mathrm{cl}}%
\define\fil{\mathrm{fil}}%
\define\fl{\mathrm{fl}}%
\define\gp{\mathrm{gp}}%
\define\fs{\mathrm{fs}}%
\define\an{\mathrm{an}}%
\define\et{\mathrm{et}}%
\define\mult{\mathrm{mult}}%
\define\sat{\mathrm{sat}}%
\define\Ker{\mathrm{Ker}\,}%
\define\Coker{\mathrm{Coker}\,}%
\define\Hom{\operatorname{\mathrm{Hom}}}%
\define\Aut{\operatorname{\mathrm{Aut}}}%
\define\Mor{\operatorname{\mathrm{Mor}}}%
\define\rank{\mathrm{rank}\,}%
\define\gr{\mathrm{gr}}%
\define{\cHom}{\operatorname{\mathcal{H}\mathit{om}}}
\define{\HOM}{\cHom}
\define{\cExt}{\operatorname{\mathcal{E}\mathit{xt}}}
\define{\cMor}{\operatorname{\mathcal{M}\mathit{or}}}
\define\cB{\Cal B}
\define\cO{\Cal O}
\renewcommand{\O}{\cO}
\define\cS{\Cal S}
\define\cM{\Cal M}
\define\cG{\Cal G}
\define\cH{\Cal H}
\define\cE{\Cal E}
\define\cF{\Cal F}

\newcommand{\ep}{\varepsilon}
\newcommand{\pe}{\frak p}
\newcommand{\Spec}{\operatorname{Spec}}
\newcommand{\val}{{\mathrm{val}}}
\newcommand{\bs}{\operatorname{\backslash}}
\newcommand{\Lie}{\operatorname{Lie}}

\renewcommand{\emph}{\it}

\newcommand{\ssm}{\smallsetminus}
\newcommand{\sig}{\sigma}
\newcommand{\Sig}{\Sigma}
\newcommand{\lan}{\langle}
\newcommand{\ran}{\rangle}
\newcommand{\fg}{\frak g}
\newcommand{\cD}{\check D}
\newcommand{\G}{\Gamma}
\define\Ext{\operatorname{\mathrm{Ext}}}%
\def\SL{\operatorname{SL}}
\def\Ad{\operatorname{Ad}}
\def\fsl{{\frak s\frak l}}
\renewcommand\Im{\mathrm{Im}\,}%
\define\End{\operatorname{\mathrm{End}}}%
\newcommand{\ket}{\mathrm{k\acute{e}t}}
\newcommand{\klog}{\mathrm{klog}}


\define{\cmh}{cohomological mixed Hodge }
\define{\kos}{\operatorname{\mathrm{Kos}}}
\define{\bnnZ}{{\bold Z}_{\ge 0}}
\define{\id}{\operatorname{\mathrm{id}}}
\define{\e}{\mathbf e}
\define{\vc}{\mathbf c}
\define{\limind}{\operatorname*{\varinjlim}}
\newcommand{\RomII}{\uppercase\expandafter{\romannumeral 2}\ }
\newcommand{\RomIV}{\uppercase\expandafter{\romannumeral 4}\ }
\define{\cC}{\Cal C}
\define{\ula}{\underline{\lambda}}
\define{\dlog}{\operatorname{dlog}}
\define{\tr}{\operatorname{Tr}}
\newcommand{\coh}{\operatorname{H}}
\define{\res}{\operatorname{Res}}


\begin{abstract}
  We prove that a projective semistable morphism of fs log analytic spaces yields polarized log Hodge structures in the canonical way.
\end{abstract}
\renewcommand{\thefootnote}{\fnsymbol{footnote}}
\footnote[0]{\hspace{-18pt}2020 {\it Mathematics Subject Classification}. Primary 32S35; Secondary 14A21, 14D07\\  
{\it key words}. Hodge theory, log geometry, log Hodge structure}

{\bf Contents}

\medskip

\S\ref{s:mt}. Main theorems

\S\ref{s:pf}.  Proof of Theorem \ref{t:main}

\S\ref{s:alb}. Log Picard varieties and log Albanese varieties  

\S\ref{s:correction}.  Correction to some remarks in \cite{FN3}

\bigskip

\section*{Introduction}

  It is a fundamental problem in log Hodge theory to prove that a vertical,
saturated, projective, and log smooth morphism $X \to S$ of fs log analytic spaces yields polarized log Hodge structures.
  In the case where $S$ is log smooth over $\bC$, this was proved in \cite{KMN} by a reduction to the classical case
by restricting everything to the largest open set $S_{\mathrm{triv}}$ of $S$ where the log structure is trivial. 

  In the case of general base where $S_{\mathrm{triv}}$ can be empty, the method in \cite{KMN} does not work, and we have studied this case by a different approach 
by constructing polarized limiting mixed Hodge structures 
\`a la Steenbrink as in \cite{F}: 
  In \cite{FN2}, we treated the case where the base $S$ is the standard log point and $X$ is strict semistable over $S$, which is the first result 
for
this problem in the case where $S$ is not necessarily log smooth.
  In \cite{FN3}, we generalized it to the case where the log rank of $S$ is
less than or equal to one by reducing to \cite{FN2}. 
  In the present paper, we proved the case where $f$ is semistable over the base of higher log ranks, that is, any fiber of $f$ is semistable over the $\bN^r$-log point for some $r \ge0$, so that any $S$ with a free log structure can appear as a base. 
  This is enabled by a recent development of the theory of 
polarized limiting mixed Hodge structures 
in the higher dimensional situation (\cite{Fmultiss}).

  We state the main results in Section \ref{s:mt}. 
  The structure of the proofs of the main results described in Section \ref{s:pf} is parallel to that in \cite{FN3}. 
  An essential part is to show the constancy of the weight filtration. 
  We prove that it is a consequence of a result in \cite{Fmultiss} by checking several commutativities between log Hodge side
and limiting mixed Hodge side.
  Section \ref{s:alb} is an application. 
  Section \ref{s:correction} is a correction of some remarks in our earlier work \cite{FN3}.

\bigskip

\noindent {\sc Acknowledgments.} 
The authors thank to Kazuya Kato 
for pointing out some errors in our previous work \cite{FN3}. 
  A large part of Section \ref{s:correction} is due to him.
The first author is partially supported by
JSPS, Kakenhi (C) No.\ JP20K03542.
  The second author is partially supported by JSPS, Kakenhi (B) No.\ JP23340008, (C) No.\ JP16K05093, and (C) No.\ JP21K03199. 
The second author thanks J.\ C.\ for suggesting this work. 

\bigskip

\noindent {\sc Notation and Terminology.} 
  For the basic notions on log geometry in the analytic context, see \cite{KN} and \cite{KU}. 
  For an analytic space $S$, we denote by ${\cal O}_S$ the sheaf of 
holomorphic functions over $S$. 
  For an fs log analytic space $S$, we denote by $M_S$ the log structure on 
it. 

  For a monoid or a sheaf of monoids $M$, we denote by $M^{\times}$ the subgroup of the invertible elements and 
by $\overline M$ the quotient $M/M^{\times}$.

  A morphism $f\colon X \to S$ of fs log analytic spaces is called {\it vertical} 
if the quotient monoid of $\overline M_{X,x}$ by the image of $\overline M_{S,f(x)}$ is a group 
for any $x \in X$. 
  It is called {\it projective} 
if its underlying morphism of analytic spaces is so.

  The log structure $M$ on an analytic space is called {\it free} if each stalk of $\overline M$ is a free monoid, that is, isomorphic to $\bN^r$ for some $r\ge0$.
  We say that $r$ is the {\it log rank} at that point. 
  An fs log analytic space is called a {\it free log point} if its underlying analytic space is $\Spec \bC$ and the log structure is free. 
  A free log point of rank $r$ is called the {\it $\bN^r$-log point}.
  The $\bN$-log point is also called the {\it standard log point}.

  For simplicity, we often denote a pullback of a sheaf on a space by the same symbol 
as that for the original sheaf. 

\section{Main theorems}\label{s:mt}

  In this section we state the main results in this paper.

\begin{para}
  Let $f\colon X \to S$ be a morphism of fs log analytic spaces. 

  Recall that if $S$ is a free log point, $f$ is said to be a semistable log smooth degeneration if it is log smooth, the log structure of $X$ is free, and for any $x \in X$, the induced homomorphism $\overline M_{S,f(x)} \to \overline M_{X,x}$ of monoids is isomorphic to the product of some diagonal maps $\bN \to \bN^r; 1 \mapsto (1,1,\ldots,1)$ for various $r$ (cf.\ 
\cite{Fmultiss}
Definition 3.3). 

  We say that $f$ is {\it semistable} if $f$ is log smooth, 
the log structure of $S$ is free, and any fiber of $f$ is a semistable log smooth degeneration. 
\end{para}

\begin{prop}
  A morphism $f\colon X \to S$ of fs log analytic spaces is semistable if and only if it is vertical, saturated, and log smooth, and the log structures of $X$ and $S$ are free.
\end{prop}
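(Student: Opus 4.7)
The plan is to prove both implications separately, using the following observation: the inclusion $s\hookrightarrow S$ of a point with its induced log structure is strict, so base change along it preserves $\overline M$-stalks. In particular, for $x$ in the fiber $X_s=X\times_S s$, one has $\overline M_{X_s,x}=\overline M_{X,x}$, and the map $\overline M_{s,s}\to\overline M_{X_s,x}$ is identified with $\overline M_{S,f(x)}\to\overline M_{X,x}$.

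For $(\Rightarrow)$, log smoothness of $f$ and freeness of $M_S$ are part of the definition of semistable. For the remaining conditions, fix $x\in X$ and set $s=f(x)$; the fiber $X_s\to s$ is a semistable log smooth degeneration. Freeness of $\overline M_{X_s,x}$ gives freeness of $\overline M_{X,x}$ by the observation above, and the map $\overline M_{S,f(x)}\to\overline M_{X,x}$ is isomorphic to a product of diagonals $\bN\to\bN^{r_i}$. It then suffices to note that each such diagonal is vertical (its monoid cokernel is $\bZ^{r_i-1}$, a group) and is a saturated morphism of monoids, and that both properties are stable under products.

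For $(\Leftarrow)$, log smoothness of $f$ and freeness of $M_S$ are given. Fix $s\in S$; then $X_s\to s$ is log smooth by base change, and by the observation above its log structure is free (inherited from $X$) and the map on $\overline M$-stalks agrees with that of $f$. Everything therefore reduces to the following classification of monoid homomorphisms, which is where I expect the main work to lie: if $\varphi\colon\bN^k\to\bN^n$ is injective with torsion-free cokernel (log smoothness), has group monoid-cokernel (verticality), and is saturated, then $\varphi$ is isomorphic to a product of diagonal maps $\bN\to\bN^{r_i}$ with $\sum r_i=n$.

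For this classification, my approach is to analyze the columns $v_i=\varphi(e_i)\in\bN^n$ combinatorially. Exactness, which follows from saturatedness, forces the $v_i$ to have pairwise disjoint supports and entries in $\{0,1\}$: any failure yields an element of $(\varphi^{\gp})^{-1}(\bN^n)\setminus\bN^k$, violating exactness. Verticality then forces $\bigcup_i\operatorname{supp}(v_i)=\{1,\ldots,n\}$. The data thus give a partition $\{1,\ldots,n\}=\bigsqcup_i\operatorname{supp}(v_i)$, and $\varphi$ is, up to a permutation of target coordinates, the product of the corresponding diagonals. As this combinatorial classification is standard in log geometry, one may alternatively simply cite it from the literature on saturated log smooth morphisms with free log structures.
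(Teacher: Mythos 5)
Your overall architecture (reduce to fibers via strictness of point inclusions, then classify the monoid homomorphisms $\overline M_{S,f(x)}\to\overline M_{X,x}$) matches the paper's implicit strategy, and your $(\Rightarrow)$ direction is fine. But the key step of $(\Leftarrow)$ — the classification — rests on a claim that is false: exactness does \emph{not} force the columns $v_i=\varphi(e_i)$ to have pairwise disjoint supports and entries in $\{0,1\}$. Two counterexamples: (a) $\varphi\colon\bN\to\bN$, $1\mapsto 2$ is exact (indeed $(\varphi^{\gp})^{-1}(\bN)=\bN$), integral, vertical (its monoid cokernel is $\bZ/2$, a group) and log smooth, yet has an entry equal to $2$; it is excluded only by the \emph{saturation} condition proper (it is the classic non-saturated map, corresponding to a double fiber), not by exactness. (b) $\varphi\colon\bN^2\to\bN^3$, $e_1\mapsto(1,1,0)$, $e_2\mapsto(0,1,1)$ is exact ($a\ge0$, $a+b\ge0$, $b\ge0$ is equivalent to $a,b\ge0$), yet the supports overlap; it is excluded by \emph{integrality}, not exactness. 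Relatedly, your parenthetical ``injective with torsion-free cokernel (log smoothness)'' is not justified: in characteristic $0$, Kato's criterion for log smoothness allows arbitrary finite torsion in $\Coker(\varphi^{\gp})$ (again $1\mapsto2$, with cokernel $\bZ/2$, is log smooth), so you cannot use torsion-freeness to dispose of example (a) either.

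The paper's own treatment is terse but allocates the work correctly: its Lemma proves exactly the disjoint-support statement, deducing it from \emph{integrality} by a direct application of the integrality condition (if $f_1$ divides both $h(e_1)$ and $h(e_2)$, writing $h(e_1)b_3=h(e_2)b_4$ and applying integrality yields $h(e_2)\mid h(e_2)/f_1$, a contradiction); the remaining step — that a vertical, saturated, log smooth $\bN\to\bN^{s(i)}$ must be the diagonal $1\mapsto(1,\dots,1)$ — is delegated to the basic properties of saturated homomorphisms in Tsuji's paper. Your closing fallback of citing the literature for the whole classification would be acceptable, but the explicit mechanism you propose in its place does not work: you need integrality for the support decomposition and the full saturation condition (not mere exactness) to pin the entries to $1$.
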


  This is not used later and seen from the basic properties of saturated homomorphisms (cf.\ \cite{T}) and the following lemma. 

\begin{lem}
  A 
homomorphism $h\colon \bN^r \to \bN^s$ is integral
if and only if it is isomorphic to the product of $r$ homomorphisms 
$\bN \to \bN^{s(i)}$ with $s=\sum_i s(i)$.
\end{lem}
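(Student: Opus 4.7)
The plan is to apply Kato's standard criterion for integrality of a homomorphism $h\colon P \to Q$ of integral monoids: for every relation $h(a_1) + b_1 = h(a_2) + b_2$ with $a_i \in P$ and $b_i \in Q$, there must exist $a_3, a_4 \in P$ and $c \in Q$ with $b_1 = h(a_3) + c$, $b_2 = h(a_4) + c$, and $a_1 + a_3 = a_2 + a_4$.

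For the ``if'' direction, any homomorphism $\bN \to \bN^{s(i)}$ is integral: given a relation, one of $a_1, a_2 \in \bN$ is $\le$ the other (say $a_1 \le a_2$), and setting $a_3 = a_2 - a_1$, $a_4 = 0$, $c = b_2$ verifies the criterion (using $b_1 - b_2 = (a_2-a_1)\, h(1) \in \bN^{s(i)}$). Since Kato's criterion manifestly decomposes along any product decomposition of source and target, a product of integral homomorphisms is integral, which handles this direction.

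For the ``only if'' direction, write $f_i = h(e_i) \in \bN^s$ for the image of the $i$th standard basis vector. The key claim is: for $i \ne j$ the supports of $f_i$ and $f_j$ in $\{1,\ldots,s\}$ are disjoint. Granting this, one partitions $\{1,\ldots,s\}$ into sets $I_1,\ldots,I_r$ with $\operatorname{supp}(f_i) \subseteq I_i$ (placing the leftover indices arbitrarily, say all in $I_1$); identifying $\bN^s = \prod_i \bN^{|I_i|}$ then exhibits $h$ as the product of the $r$ maps $\bN \to \bN^{|I_i|}$ sending $1$ to the restriction of $f_i$ to $I_i$, with $s = \sum_i |I_i|$.

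To prove the disjointness, I argue by contradiction: suppose some $k$ lies in both $\operatorname{supp}(f_i)$ and $\operatorname{supp}(f_j)$ with $i \ne j$, set $p = f_i^{(k)}$ and $q = f_j^{(k)}$ (both positive), and let $M \in \bN^s$ be the componentwise maximum of $qf_i$ and $pf_j$, so that $M^{(k)} = pq$. The identity
\[
h(qe_i) + (M - qf_i) = M = h(pe_j) + (M - pf_j)
\]
is a valid relation in $\bN^s$, and Kato's criterion provides $a_3, a_4 \in \bN^r$ and $c \in \bN^s$ satisfying (i) $M - qf_i = h(a_3) + c$ and (ii) $qe_i + a_3 = pe_j + a_4$. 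Reading off position $j$ of (ii) in $\bN^r$ gives $a_3^{(j)} \ge p > 0$, while position $k$ of (i) in $\bN^s$ forces $h(a_3)^{(k)} = 0$; yet $h(a_3)^{(k)} \ge a_3^{(j)} f_j^{(k)} \ge pq > 0$, the desired contradiction. The main (small) obstacle is guessing this particular relation to feed to Kato's criterion; once it is in hand, everything collapses to positivity bookkeeping in a single coordinate.
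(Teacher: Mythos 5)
Your proof is correct and follows essentially the same route as the paper's: both reduce the nontrivial direction to showing that the supports of the $h(e_i)$ are pairwise disjoint, feed a suitably chosen relation into Kato's criterion for integrality, and derive a contradiction by reading off one coordinate of the resulting equation $a_1+a_3=a_2+a_4$ together with a divisibility/positivity constraint on $h(a_3)$. The only differences are cosmetic: the paper's relation is obtained by dividing $h(e_{i_1})+h(e_{i_2})$ by the common factor $f_j$ rather than by taking a componentwise maximum, and you additionally spell out the easy ``if'' direction and the passage from disjoint supports to the product decomposition, both of which the paper leaves implicit.
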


\begin{pf}
  Write the canonical bases $\bN^r$ and $\bN^s$ as $(e_i)_i$ and $(f_j)_j$, respectively. 
  It is enough to show that there is no $f_j$ dividing two $h(e_{i_1})$ 
and $h(e_{i_2})$ simultaneously. 
  Assume that $f_1$ divides $h(e_1)$ and $h(e_2)$. 
  Let $b_3=h(e_2)/f_1$ and $b_4=h(e_1)/f_1$. 
  Then $h(e_1)b_3=h(e_2)b_4$.
  By the integrality of $h$, we have $a_3,a_4 \in \bN^r$ 
such that $e_1a_3=e_2a_4$ and $h(a_3)$ divides $b_3$. 
  The first equality implies that $e_2$ divides $a_3$, hence, $h(e_2)$ divides $h(a_3)$. 
  Therefore, $h(e_2)$ divides $b_3=h(e_2)/f_1$, a contradiction.   
\end{pf}

  The following is the main theorem in this paper. 

\begin{thm}
\label{t:main}
  Let $f\colon X \to S$ be a projective and semistable morphism of fs log analytic spaces. 
  Let $q \in \bZ$. 

$(1)$ The triple 
$$\bigl(R^qf^{\log}_*{\bZ}, (R^qf_{\ket*}\omega^{\cdot\ge p}_{X/S,\ket})_{p \in \bZ}, (\cdot , \cdot )\bigr)$$
is a PLH on $S_{\ket}$.

$(2)$ The triple $$\bigl(R^qf^{\log}_*{\bZ}, (R^qf_{*}\omega^{\cdot\ge p}_{X/S})_{p \in \bZ}, (\cdot , \cdot )\bigr)$$
is a nonk\'et PLH on $S$. 
\end{thm}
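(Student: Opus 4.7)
My plan is to parallel the argument of \cite{FN3} and reduce the global assertion to a pointwise statement supplied by the polarized limiting mixed Hodge structures of \cite{Fmultiss}, which are now available in the higher-rank situation created by a base of arbitrary free log rank. First I would dispose of the components that do not depend essentially on the semistability hypothesis. Since $f$ is log smooth and proper, $f^{\log}$ is a topological fiber bundle, so $R^qf^{\log}_*\bZ$ is a local system on $S^{\log}$; this is the input already used in \cite{FN2} and \cite{FN3}. The Hodge-filtered coherent sheaves $R^qf_{\ket*}\omega^{\cdot\ge p}_{X/S,\ket}$ are locally free of the expected rank by Hodge-to-de Rham degeneration (which itself is reduced to the pointwise case), and the log Riemann--Hilbert correspondence identifies the $\O_{S^{\log}}$-module obtained from $R^qf_{\ket*}\omega^{\cdot}_{X/S,\ket}$ with the Betti side. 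Griffiths transversality comes from the Koszul-type filtration on $\omega^{\cdot}_{X/S,\ket}$ induced by $f^*\omega^{\cdot}_{S,\ket}$, and the polarization is produced by cup product with a relative hyperplane class together with the relative trace, as in the classical case.

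The substantive content is the verification of the PLH axiom at each $s\in S$: the stalk at $s$ of the triple, equipped with the monodromy weight filtration associated to a general element of the monodromy cone inside $\Hom(\overline M_{S,s}^{\gp},\bZ)$, must underlie a polarized mixed Hodge structure. Because $f$ is semistable, the fiber $f^{-1}(s)$ is a semistable log smooth degeneration over the $\bN^r$-log point with $r$ the log rank at $s$, so \cite{Fmultiss} furnishes a polarized LMHS on the cohomology of this fiber. The step I expect to be the main obstacle is matching our PLH candidate with this LMHS: this amounts to identifying the Betti stalk with Steenbrink's nearby-cycle complex, identifying the $S_{\ket}$-construction of the Hodge filtration with the limit Hodge filtration on that complex, and verifying that our pairing coincides with Steenbrink's polarization. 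These are precisely the ``several commutativities between log Hodge side and limiting mixed Hodge side'' flagged in the introduction; once they are in place, the constancy of the weight filtration as $s$ varies through a stratum and across strata is the uniformity statement proved in \cite{Fmultiss}, which is what makes the whole argument go through in the higher-rank setting.

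Part (2) is then obtained from part (1) by descent from $S_{\ket}$ to $S$. Because the log structure on $S$ is free, Kummer coverings act transparently on the coherent cohomology of the log de Rham complex, so $R^qf_*\omega^{\cdot\ge p}_{X/S}$ recovers the $\ket$-version after suitable base change, and the PLH axioms verified on $S_{\ket}$ descend to the corresponding nonk\'et axioms on $S$. The Betti component and the polarization are unchanged, so only the identification of the two Hodge filtrations requires checking, which is standard given the structure of semistable morphisms.
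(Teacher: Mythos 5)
Your overall skeleton is right—establish the pre-PLH structure and small Griffiths transversality as in \cite{FN3}, reduce positivity to a pointwise statement, and feed in the polarized limiting mixed Hodge structures of \cite{Fmultiss}—but you have misplaced the central difficulty. You locate the needed ``constancy of the weight filtration'' in the variation of $s$ through and across strata of $S$. That variation is dispatched immediately: by \cite[Theorem 2.5 (2)]{FN3} one reduces positivity to the case where the base is a single $\bN^r$-log point. The real issue, and the reason the higher-rank case does not follow from the standard-log-point results, is the constancy of the relative monodromy weight filtration $W(N)$ as $N$ ranges over the \emph{interior of the $r$-dimensional monodromy cone at that one point}. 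Knowing that every pullback to the standard log point is a PLH gives a nilpotent orbit for individual interior elements $N_0$ but does not force $W(N)$ to be independent of $N$; the counterexample in Section \ref{s:correction} (where $W(aN_1+bN_2)$ jumps along $a^2=2b^2$) shows exactly this failure mode. The paper obtains the constancy from \cite{Fmultiss}: the filtration $L$ on $H^q(X,A_{\bC})$ satisfies $N(\vc)^k\colon \gr^L_k\overset{\sim}{\to}\gr^L_{-k}$ for \emph{all} $\vc\in(\bR_{>0})^r$, so $L=W(N(\vc))$ uniformly; and the ``commutativity'' you flag as an expected obstacle is then the concrete computation that, under the quasi-isomorphism $\theta\colon\omega^{\cdot}_{X/S}\to A_{\bC}$, the residues of the Gauss--Manin connection satisfy $\res_i(\nabla)=-N_i$ (done via the auxiliary complex $B_{\bC}$ and the maps $\eta_i$), so that $(2\pi\sqrt{-1})^{-1}\log(\gamma_i)$ is identified with $-N_i$ and the cone's weight filtrations are all equal to $L$. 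Without naming this identification and the cone-interior constancy it delivers, your argument does not close, because the CKS criterion (Lemma \ref{t:main} ff., i.e.\ \cite[Proposition (4.66)]{CKS}) needs both one nilpotent orbit $(N_0,F)$ \emph{and} the constancy over the interior to conclude that $(\sigma,F)$ generates a nilpotent orbit.

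A secondary point: your reduction between (1) and (2) runs in the wrong direction. The paper notes that, by \cite[Theorem 2.5 (3)]{FN3}, the k\'et triple is the pullback of the nonk\'et triple, proves (2), and obtains (1) by pullback; your proposal to prove (1) on $S_{\ket}$ and then ``descend'' to $S$ is the harder direction and would require a descent argument that is never needed.
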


  Here, the polarization $(\cdot , \cdot)$ determined by a choice of a relatively ample line bundle and 
the isomorphism between the lattice and the vector bundle after tensoring $\cO_S^{\klog}$ or $\cO_S^{\log}$ are given in \cite{FN3}.
  See ibid.\ Section 2.

\begin{rem}
  It is plausible that we can weaken  projectivity by log K\"ahlerness.
\end{rem}

\section{Proof of Theorem \ref{t:main}}\label{s:pf}
  We prove Theorem \ref{t:main}. 

\begin{para}
  Since by \cite[Theorem 2.5 (3)]{FN3}, the k\'et log Hodge to log de Rham spectral sequence is the pullback of the nonk\'et one, the triple in Theorem \ref{t:main} (1) is the pullback of the triple in Theorem \ref{t:main} (2).
  Hence, it is enough to prove Theorem \ref{t:main} (2). 
  In the rest of this section, pre-PLH and PLH are in the nonk\'et sense.

  By \cite{FN3}, we already know that the triple 
in Theorem \ref{t:main} (2) 
is a pre-PLH satisfying small Griffiths transversality and 
the condition 

\smallskip

\noindent $(*)$ the pullback with respect to any morphism from the standard log point to $S$ is a PLH. 

\smallskip

\noindent
In fact, \cite{FN3} 6.7, 2 says that it is a pre-PLH satisfying small Griffiths transversality.  
(Note here that the orthogonality can be checked by the pullback with respect to any morphism from the standard log point to $S$.
  Cf.\ \cite[Theorem 2.5 (2)]{FN3}.)
  Further, the condition $(*)$ follows from the fact that the theorem is valid when the base is the standard log point. 

 To prove the positivity, by \cite[Theorem 2.5 (2)]{FN3}, we may assume that the base is an fs log point, which is the $\bN^r$-log point by the assumption of the semistability. 

 We work in the framework of \cite{KU}.
  Let $t \in S^{\log}$, and let $H_0=(R^qf^{\log}_*{\bZ})_t/$(torsion). 
  Let $\langle\cdot,\cdot\rangle_0$ be the bilinear form on $H_{0,\bC}$ induced by our $(\cdot,\cdot)$. 
  Let $(h^{j,k})_{j,k}$ be the Hodge numbers of our pre-PLH.
  Then the 4-ple $\Phi_0=(q,(h^{j,k})_{j,k}, H_0, \langle\cdot,\cdot\rangle_0)$ 
  satisfies the condition in \cite[0.7.3]{KU} because our pre-PLH gives a PLH after some pullback to the standard log point. 
  Thus, by \cite[2.5.3]{KU}, we have an element $(\sigma, Z) \in \check D_{\mathrm{orb}}$, which is the image of $S$ by the period map. 
  It is  enough to prove that $(\sigma,Z)$ is a nilpotent orbit. 
  Let $F \in Z$. 
  By $(*)$, for some interior $N_0$ of $\sigma$, $(N_0, F)$ generates a nilpotent orbit. 
  By the following lemma, the rest is to see that the weight filtration with respect to an interior of $\sig$ is constant, which we will show below in \ref{constant}. 
\end{para}

\begin{lem}
  Let $\Phi_0=(q,(h^{j,k})_{j,k}, H_0, \langle\cdot,\cdot\rangle_0)$ be the data satisfying the condition {\rm \cite[0.7.3]{KU}}. 
  Let $\sig$ be a nilpotent cone, let $N_0$ be an interior of $\sig$, and let $F \in \check D$ such that $F$ satisfies the Griffiths transversality with respect to $\sig$. 
  Assume that the weight filtration with respect to an interior is constant and that $(N_0,F)$ generates a nilpotent orbit. 
  Then $(\sig,F)$ generates a nilpotent orbit. 
\end{lem}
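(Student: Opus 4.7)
The plan is to reduce the lemma to the standard characterization of nilpotent orbits due to Cattani--Kaplan (cf.\ \cite{KU}): for a nilpotent cone $\sig$ and $F \in \cD$, the pair $(\sig, F)$ generates a nilpotent orbit if and only if $F$ satisfies Griffiths transversality with respect to $\sig$, the monodromy weight filtration $W(N)$ is independent of $N$ as $N$ varies over the interior of $\sig$ (with common value $W$), and for some (equivalently, every) $N$ in the interior of $\sig$ the triple $(W, F, N)$ is a polarized mixed Hodge structure.

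Given this characterization, the verification amounts to unpacking the hypotheses. Griffiths transversality with respect to $\sig$ is part of the assumption. Constancy of $W$ on the interior of $\sig$ is assumed. To supply the remaining PMHS condition, apply the characterization in the one-variable case to the hypothesis that $(N_0, F)$ generates a nilpotent orbit: this yields that $(W(N_0), F, N_0)$ is a polarized mixed Hodge structure. By the constancy assumption, $W(N_0)$ equals the common filtration $W$, and $N_0$ lies in the interior of $\sig$, so $(W, F, N_0)$ is a PMHS, as required. The characterization then delivers that $(\sig, F)$ generates a nilpotent orbit.

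The only substantive ingredient, which we invoke as a standard result rather than reprove, is the Cattani--Kaplan equivalence between ``some interior $N$ polarizes $(W, F)$'' and ``every interior $N$ polarizes $(W, F)$'' under the assumption of constancy of $W$. This is the main conceptual point and the place where work is really happening; everything else in the lemma is a direct translation between the analytic description of a nilpotent orbit (in terms of $\exp(\sum z_j N_j) F \in D$ for $\Im(z_j) \gg 0$) and its Hodge-theoretic description (in terms of $(W, F)$ being polarized). For the write-up it will likely be cleanest to cite the relevant statement from \cite{KU} verbatim, so that the lemma itself is essentially bookkeeping.
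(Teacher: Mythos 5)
Your proposal is correct and matches the paper's own proof, which simply cites the first two lines of p.\ 505 of \cite{CKS} together with Proposition (4.66) there — exactly the Cattani--Kaplan--Schmid equivalence between nilpotent orbits and polarized mixed Hodge structures (with the ``some interior element polarizes iff every interior element polarizes'' point) that you invoke. Your write-up just spells out the bookkeeping that the paper leaves implicit.
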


\begin{pf}
  This is mentioned in the first and the second lines of \cite[p.505]{CKS}, and 
a consequence of \cite[Proposition (4.66)]{CKS}. 
  Cf.\ \cite[Proposition A.2]{KNU} and \cite[Remark A.2.1]{KNU}.
\end{pf}

\begin{para}
\label{constant}
We prove the constancy of the weight filtrations.
Let $S$ be the $\bN^r$-log point
and $X$ a projective and semistable fs log analytic space over $S$.
By Lemma \ref{lem:1} below,
we may work on $H_{0, \bC}$.

Let $q_i$ be the $i$-th unit vector of $\bN^r$,
regarded as an element of $M_S=\bN^r \oplus \bC^{\times}$
and $\gamma_1, \dots, \gamma_r$ the dual base of
$\pi_1(S^{\log})=\Hom(M_S/\cO_S^{\times}, \bZ)$
as in \cite[Section 2]{KU}.
By the action of $\pi_1(S^{\log})$ on $H_0$,
every $\gamma_i$ gives us an automorphism of $H_{0,\bC}$,
which is denoted by the same letter $\gamma_i$ for all $i=1, \dots, r$.
Then the cone $\sigma$ is generated
by $\log (\gamma_1), \dots, \log (\gamma_r)$ by definition.
On the other hand,
we have the natural integrable log connection
$\nabla \colon H_{\cO} \to \omega^1_S \otimes H_{\cO}$,
where $H_{\cO}=R^qf_{*}\omega^{\cdot}_{X/S}$
which is isomorphic to $H_{0, \bC}$
via the 
log Riemann--Hilbert correspondence in \cite{KN} 
(cf.~\cite{IKN} Theorem (6.3)).
By the identifications
$\omega^1_S \simeq \cO_S \otimes (M_S^{\gp}/\cO_S^{\times})
\simeq \cO_S \otimes \bZ^r$,
the composite of $\nabla$ and the $i$-th projection
$\omega_S^1 \otimes H_{\cO} \simeq \bZ^r \otimes H_{\cO} \to H_{\cO}$
is denoted by $\res_i(\nabla)$,
which is called the $i$-th residue morphism of $\nabla$.
Then we have
\begin{equation*}
(2\pi\sqrt{-1})^{-1}\log (\gamma_i)=\res_i(\nabla)
\end{equation*}
for $i=1, \dots, r$ as in \cite[Proposition 2.3.4]{KU}.
(This equality is different from the usual one by the sign
(see e.g. \cite[Proposition II.3.11]{D2}). 
This is caused by the difference of the convention
for the action of $\pi_1(S^{\log})$.
See \cite[Appendix A1]{KU}.)

In \cite{Fmultiss},
a complex of $\bC$-sheaves $A_{\bC}$ on $X$
and a quasi-isomorphism
$\theta \colon \omega^{\cdot}_{X/S} \to A_{\bC}$
are constructed. 
Thus we have an isomorphism
\begin{equation}
\label{eq:1}
H_{\cO}=R^qf_{*}\omega^{\cdot}_{X/S}
=\coh^q(X, \omega_{X/S}^{\cdot}) \to \coh^q(X, A_{\bC})
\end{equation}
induced by $\theta$.
Moreover,
a finite increasing
filtration $L$ on $A_{\bC}$
and morphisms of complexes
$\nu_1, \dots, \nu_r \colon A_{\bC} \to  A_{\bC}$
with $\nu_i(L_mA_{\bC}) \subset L_{m-2}A_{\bC}$ for all $m$ and $i$,
are constructed in \cite{Fmultiss}.
Let $N_i \colon \coh^q(X, A_{\bC}) \to \coh^q(X, A_{\bC})$
denote 
the morphism induced by $\nu_i$ for $i=1, \dots, r$.
We set
$N(\vc)=\sum_{i=1}^{r}c_iN_i$
for $\vc=(c_1, \dots, c_r) \in \bC^r$.
Then it is proved, in \cite{Fmultiss},
that the morphism $N(\vc)^k$
induces an isomorphism
$$
\gr_k^L\coh^q(X, A_{\bC}) \to \gr_{-k}^L\coh^q(X, A_{\bC})
$$
for all $k \in \bnnZ$ and $\vc \in (\bR_{> 0})^r$.
Thus we know that $L$ coincides with
the weight filtration of $N(\vc)$ for all $\vc \in (\bR_{>0})^r$.
Therefore it suffices to prove
\begin{equation*}
\res_i(\nabla)=-N_i
\end{equation*}
for all $i=1, \dots, r$
under the isomorphism \eqref{eq:1}.
So, we have to describe
the connection $\nabla$
in terms of $N_1, \dots, N_r$,
via the identification \eqref{eq:1}.
This can be done by a similar argument
to \cite[(4.22)]{S76} and \cite[Section 5]{F2}.

First, we note that the connection $\nabla$ 
on $H_{\cO}=\coh^q(X, \omega^{\cdot}_{X/S})$
is induced from the exact sequence
\begin{equation*}
0 \to \omega^1_S \otimes \omega_{X/S}^{\cdot}[-1]
\to \omega_X^{\cdot}
/\omega^2_S \wedge \omega_X^{\cdot}[-2]
\to \omega_{X/S}^{\cdot} \to 0
\end{equation*}
as the connecting homomorphism
\begin{equation*}
\coh^q(X, \omega^{\cdot}_{X/S})
\to \omega^1_S \otimes \coh^q(X, \omega^{\cdot}_{X/S}),
\end{equation*}
where
$\omega^2_S \wedge \omega_X^{\cdot}[-2]$
is the subcomplex of $\omega_X^{\cdot}$
consisting of $\omega^2_S \wedge \omega_X^{p-2} \subset \omega_X^p$
for all $p$.

In \cite[Definition 3.21]{Fmultiss},
the complex $A_{\bC}$
is defined by
\begin{gather*}
A_{\bC}
=\Bigl(\bC[u_1, \dots, u_r] \otimes_{\bC} \omega^{\cdot}_X
\Bigr/\sum_{i=1}^{r}W(i)_{-1}\Bigr)[r],
\end{gather*}
where $W(i)_{-1}$ is a subcomplex
of $\bC[u_1, \dots, u_r] \otimes_{\bC} \omega^{\cdot}_X$
for $i=1, \dots, r$
defined in Definition 3.21 of \cite{Fmultiss}.
Moreover, the morphisms
$\theta \colon \omega^{\cdot}_{X/S} \to A_{\bC}$
and $\nu_i \colon A_{\bC} \to A_{\bC}$ ($i=1, \dots, r$)
are defined in \cite[Definitions 3.15 and 4.6]{Fmultiss} by
\begin{gather*}
\theta=1 \otimes \dlog t_1 \wedge \cdots \wedge \dlog t_r \wedge, \\
\nu_i=(u_i \cdot) \otimes \id,
\end{gather*}
where
$1 \in \bC[u_1, \dots, u_r]$ and
$(u_i \cdot)$ denotes the $\bC$-linear map
$\bC[u_1, \dots, u_r] \to \bC[u_1, \dots, u_r]$
by multiplying $u_i$. 
We define a morphism
$\mu \colon A_{\bC} \to \omega^1_S \otimes A_{\bC}$ by
\begin{equation}
\label{eq:2}
\mu=\sum_{i=1}^{r}\dlog t_i \otimes \nu_i
\end{equation}
and a complex
$B_{\bC}$ by
$B_{\bC}^p=(\omega^1_S \otimes A_{\bC}^{p-1}) \oplus A_{\bC}^p$
with the differential
\begin{equation*}
(\omega^1_S \otimes A_{\bC}^{p-1}) \oplus A_{\bC}^p
\ni (x, y) \mapsto (-(\id \otimes d)(x)-\mu(y), dy)
\in (\omega^1_S \otimes A_{\bC}^p) \oplus A_{\bC}^{p+1}
\end{equation*}
for all $p$.
By using the fact that $\nu_i$ is a morphism of complexes for all $i$,
we can easily check that $\mu$ is a morphism of complexes
and $B_{\bC}$ is a complex. 
Then there exists a natural exact sequence
\begin{equation*}
0 \to \omega_S^1 \otimes A_{\bC}[-1]
\to B_{\bC} \to A_{\bC} \to 0
\end{equation*}
from the definition of $B_{\bC}$.
For $i=1, \dots, r$,
a morphism of $\cO_X$-modules 
$\eta_i \colon \omega^p_X \to A_{\bC}^{p-1}$
is defined by
\begin{equation*}
\eta_i(\omega)
=(-1)^{r-i}
1 \otimes \dlog t_1 \wedge \cdots \wedge \dlog t_{i-1}
\wedge \dlog t_{i+1} \wedge \cdots \wedge \dlog t_r \wedge \omega
\end{equation*}
for $\omega \in \omega^p_X$.
Then
$\eta_i$ trivially
factors through the surjection
$\omega^p_X \to \omega^p_X/\omega^2_S \wedge \omega^{p-2}_X$ 
and the equality
\begin{equation*}
d\eta_i(\omega)=-\eta_i(d\omega)-\nu_i(\theta(\overline{\omega}))
\end{equation*}
holds for $i=1, \dots, r$,
where $\overline{\omega}$ is the image of $\omega$
by the surjection $\omega^p_X \to \omega^p_{X/S}$. 
Thus we obtain a morphism
\begin{equation*}
\eta=(\sum_{i=1}^{r}\dlog t_i \otimes \eta_i)+\theta
\colon \omega^{\cdot}_X/\omega^2_S \wedge \omega^{\cdot}_X[-2]
\to B_{\bC},
\end{equation*}
which turns out to be a morphism of complexes
by direct calculation.
Moreover, we have a 
commutative diagram
\begin{equation*}
\begin{CD}
0 @>>> \omega^1_S \otimes \omega^{\cdot}_{X/S}[-1]
@>>> \omega^{\cdot}_X/\omega^2_S \wedge \omega^{\cdot}_X[-2]
@>>> \omega^{\cdot}_{X/S} @>>> 0 \\
@. @V{\id \otimes \theta}VV @VV{\eta}V @VV{\theta}V \\
0 @>>> \omega_S^1 \otimes A_{\bC}[-1]
@>>> B_{\bC} @>>> A_{\bC} @>>> 0
\end{CD}
\end{equation*}
with exact rows. 
Therefore the connection $\nabla$
is identified with the morphism induced by $-\mu$,
and hence $\res_i(\nabla)=-N_i$ for all $i=1, \dots, r$
via the isomorphism \eqref{eq:1} as desired.

\begin{lem}
\label{lem:1}
Let $V$ be a finite dimensional $\bR$-vector space
and $N_1, N_2$ nilpotent endomorphisms of $V$.
Then $\id \otimes N_1$ and $\id \otimes N_2$
are nilpotent endomorphisms
of $\bC$-vector space $V_{\bC}=\bC \otimes_{\bR} V$.
If $W(\id \otimes N_1)=W(\id \otimes N_2)$ on $V_{\bC}$,
then $W(N_1)=W(N_2)$ on $V$.
\end{lem}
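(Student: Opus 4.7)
The plan is to reduce the statement to the base-change compatibility of the monodromy weight filtration. The nilpotency of $\id \otimes N_i$ is immediate from $(\id \otimes N_i)^k = \id \otimes N_i^k$, so no argument is needed on that point.

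The substantive step is to prove that for any nilpotent endomorphism $N$ of a finite dimensional $\bR$-vector space $V$, one has
\[
W(\id \otimes N)_k = W(N)_k \otimes_\bR \bC
\]
as subspaces of $V_\bC$ for every integer $k$. I would argue this via the uniqueness characterization of the monodromy weight filtration. The right-hand side defines an increasing filtration on $V_\bC$ that is shifted by $-2$ under $\id \otimes N$; since $\bC$ is $\bR$-flat, the induced morphism on graded pieces is $\id \otimes N^k$ acting on $(\gr_k^{W(N)} V) \otimes_\bR \bC$, and this is an isomorphism because $N^k \colon \gr_k^{W(N)} V \to \gr_{-k}^{W(N)} V$ already is. The two defining properties of the monodromy weight filtration then identify $W(N) \otimes_\bR \bC$ with $W(\id \otimes N)$.

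Given this, the hypothesis $W(\id \otimes N_1) = W(\id \otimes N_2)$ on $V_\bC$ yields $W(N_1)_k \otimes_\bR \bC = W(N_2)_k \otimes_\bR \bC$ inside $V_\bC$ for every $k$, and intersecting with the real subspace $V \subset V_\bC$ recovers $W(N_1)_k = W(N_2)_k$, which finishes the proof. The main, and essentially only, obstacle is the base-change compatibility of the weight filtration; this is well known, but it is the one place where a genuine argument is required rather than a purely formal manipulation.
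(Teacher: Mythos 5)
Your proposal is correct and follows essentially the same route as the paper: both establish the base-change compatibility $W(\id \otimes N)_m = \bC \otimes_{\bR} W(N)_m$ (the paper asserts it directly from the definition of the weight filtration, while you justify it via the uniqueness characterization) and then conclude by intersecting with the real subspace $V$. The extra detail you supply for the base-change step is a reasonable expansion of what the paper leaves implicit.
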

\begin{proof}
By the very definition of the weight filtration
of a nilpotent endomorphism,
we have $W(\id \otimes N_i)_m=\bC \otimes_{\bR} W(N_i)_m$
for all $m$ and for $i=1,2$.
Therefore we have
\begin{equation*}
W(N_1)_m=W(\id \otimes N_1)_m \cap V
=W(\id \otimes N_2)_m \cap V=W(N_2)_m
\end{equation*}
for all $m$.
\end{proof}

\end{para}

\section{Log Picard varieties and log Albanese varieties}\label{s:alb}

  Exactly in the same way as in \cite[Section 7]{FN3}, we can construct the log Picard variety and the log Albanese variety in the semistable case. 

  We briefly explain the results.  
  The proofs are all parallel to there. 

\begin{thm}
\label{t:good}
  Let $f\colon X \to S$ be a projective and semistable morphism of fs log analytic spaces. 
  Then $f$ is good both in the sense of {\rm \cite{KKN}} $7.1$ and in the sense of {\rm \cite{KKN}} $7.2$. 
  The quasi-isomorphisms in the condition $\rm{(i)}$ in {\rm \cite{KKN}} $7.1$ and in the condition $\rm{(i)}$ in {\rm \cite{KKN}} $7.2$ coincide. 
\end{thm}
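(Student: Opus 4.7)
The plan is to follow \cite[Section 7]{FN3} essentially verbatim, with Theorem \ref{t:main} of the present paper replacing the log rank $\le 1$ PLH input used there. The argument in \cite{FN3} Section 7 feeds on only three ingredients: the geometric hypotheses of projectivity and log smoothness of $f$, the standard coherence and base-change properties of the sheaves $R^qf_*\omega^{\cdot\ge p}_{X/S}$, and the statement that the triple appearing in Theorem \ref{t:main} (2) is a nonk\'et PLH. All three are now available in the higher-rank semistable setting: the first by hypothesis, the second by the results of \cite{FN3} which do not require log rank $\le 1$, and the third by Theorem \ref{t:main} (2).

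With this in place, I would verify conditions (i) and (ii) of \cite{KKN} 7.1 in order. The quasi-isomorphism required in condition (i) is to be built from the Hodge filtration and the polarization; since the construction in \cite{FN3} is local on $S$ and purely formal from the pre-PLH data together with the log Riemann--Hilbert comparison of \cite{KN} and \cite{IKN}, it carries over to our setting without modification. The positivity and nondegeneracy input needed for condition (ii), and likewise the analogous conditions for \cite{KKN} 7.2, reduces again to Theorem \ref{t:main} plus the parallel arguments of \cite{FN3} Section 7.

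For the coincidence of the two quasi-isomorphisms asserted in the last sentence of Theorem \ref{t:good}, I would observe that both are constructed from the same Hodge filtration on $R^qf_*\omega^{\cdot}_{X/S}$ by the same formal recipe, one in the k\'et and the other in the nonk\'et framework, so the identification reduces to the compatibility of the two Hodge-to-de Rham spectral sequences recalled at the start of Section \ref{s:pf} via \cite[Theorem 2.5 (3)]{FN3}. The main foreseeable obstacle is not any new Hodge-theoretic input, since Theorem \ref{t:main} already supplies everything needed, but rather the bookkeeping required to transcribe the compatibilities of \cite{FN3} (pullback to fs log points, compatibility with the Hodge filtration, and the various diagram chases that build the quasi-isomorphism) into the higher-rank semistable framework. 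These should however be routine once the PLH theorem is available in full generality.
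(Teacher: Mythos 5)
Your proposal matches the paper's proof: the paper simply observes that the argument of \cite[Theorem 7.1]{FN3} goes through verbatim once Theorem \ref{t:main} (2) is substituted for \cite[Corollary 2.7]{FN3}, noting that every goodness condition except the latter half of \cite[7.1 (iii)]{KKN} holds in general for vertical, saturated, projective, log smooth morphisms. The only small slip is that you locate the PLH input in conditions (i) and (ii) of \cite[7.1]{KKN}, whereas the paper points out that it is needed precisely for the latter half of condition (iii); this does not change the substance of the argument.
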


\begin{rem}
  Here we use the corrected definition of the goodness explained in \cite{FN2} 2.4. 
\end{rem}

\begin{para}
  The proof of Theorem \ref{t:good} is the same as that of \cite[Theorem 7.1]{FN3} except that 
 we use Theorem \ref{t:main} (2) instead of \cite[Corollary 2.7]{FN3}.
  Note that the proof there shows that 
in general, a vertical, saturated, projective, and log smooth morphism of fs log analytic spaces satisfies the conditions of the goodness in \cite[7.1]{KKN} or 
\cite[7.2]{KKN} (the isomorphisms in both coincide) except the latter half of the condition \cite[7.1 (iii)]{KKN}. 
\end{para}

\begin{para}
  By Theorem \ref{t:good}, the log Picard variety $A^*_{X/S}$ and the log Albanese variety $A_{X/S}$ are well-defined (cf.\ \cite[7.2]{FN3}).

  As in the same way as in \cite[Section 7]{FN3}, the conclusions of all results in Sections 8 and 10 in \cite{KKN} are valid for such an $f$.  
  In particular, 
the conclusions of Theorem 7.3, Corollary 7.4, and Proposition 7.5
of \cite{FN3} are valid for a projective and semistable morphism as in Theorem \ref{t:good}. 
\end{para}

\section{Correction to some remarks in \cite{FN3}}
\label{s:correction}

  In this section, we correct some errors in \cite{FN3}, which concern 
some remarks after all proofs of the main results in \cite{FN3} and 
do not affect the proofs in this paper.
  The pointing out \ref{incorrect} with the suggestion of the validity of Proposition \ref{p:blowup} and the counter example \ref{counter} are due to K.\ Kato.
  
\begin{para}
  First we overview the corrections. 
  Below in this section, pre-PLH and PLH are in the nonk\'et sense. 
  In the latter half of \cite[6.7]{FN3}, we give two remarks on a pre-PLH on a general base, which roughly say: 

(1) The positivity cannot be checked by the pullbacks to the standard log point;

(2) The positivity cannot be checked by the pullback to a log blow-up;

\noindent and we gave an 
example for both. 

  But, the 
example is incorrect as we explain below in \ref{incorrect}. 
  In fact, (2) is not valid, that is, the positivity can be checked by the pullback to a log blow-up, as we see in Proposition \ref{p:blowup} below. 
  The statement (1) is still valid and we describe a correct 
example below in \ref{counter}. 
\end{para}

\begin{para}
\label{incorrect}
  We explain how the example in the latter half in \cite[6.7]{FN3} is wrong. 
  In fact, there are some confusions about what are the morphisms from the standard log point to the $\bN^2$-log point. 
  Let $S=(\Spec \bC, \bN^2 \oplus \bC^{\times})$ as in there and let 
$S_0=(\Spec \bC, \bN \oplus \bC^{\times})$.
  Then, it is true that the example there satisfies the following property: 
It is not a PLH but the pullback of it with respect to the morphism $S_0 \to S$ charted by any local homomorphism $\bN^2 \to \bN$ is a PLH. 
  In this sense, it is a correct counter example for the analogue of \cite[Proposition 6.6 (1)]{FN3} about the positivity. 
  But, in fact, there are more morphisms $S_0 \to S$, which are not necessarily charted. 
  So, in the context there, what we had to study was not the analogue of 
\cite[Proposition 6.6 (1)]{FN3} but the following question: 

\smallskip

\noindent $(**)$ If the positivity is satisfied by the pullback with respect to any morphism $S_0 \to S$ (not necessarily coming from a homomorphism $\bN^2 \to \bN$), does it imply that the positivity is satisfied by the original pre-PLH?

\smallskip

  We explain that the example we gave is not a counter example for $(**)$. 
  Consider the morphism $S_0 \to S$ determined by the homomorphism $\bN^2 \to \bN \oplus \bC^{\times}$ sending $e_1$ to $(1,1)$ and $e_2$ to $(1,e^{-2 \pi})$. 
 ($(e_j)_{j=1,2}$ is the canonical basis as in there.) 
  Then, the pullback to $S_0$ with respect to this morphism is not a PLH because $y-(y+1)+1=0$ for any $y$.

  Similarly, the pullback to the log blow-up $S'$ 
(the notation is as in there) is not a PLH at the point of the exceptional fiber 
which induces the above $S_0 \to S$
so that it is not a counter example 
for the analogue of \cite[Proposition 6.6 (2)]{FN3} about the positivity, too. 
\end{para}

The analogue of \cite[Proposition 6.6 (2)]{FN3} about the positivity is valid as follows. 
(We slightly generalize the statement by replacing ``log blow-up'' with ``log modification''
(\cite[Definition 3.6.12]{KU})).

\begin{prop}
\label{p:blowup}
  Let $H$ be a nonk\'et pre-PLH of weight $w$ on an fs log analytic space $S$. 
  Assume that $H$ satisfies the small Griffiths transversality. 
  Let $S'\to S$ be a log modification 
  Assume that the pullback of $H$ on $S'$ satisfies the positivity (i.e., satisfies the condition {\rm \cite[1.2 (2)]{FN3}} after pulling back to each point of $S'$).
  Then $H$ also satisfies the positivity. 
  (In other words, if the pullback on $S'$ is a 
nonk\'et PLH, then $H$ is a 
nonk\'et PLH.)
\end{prop}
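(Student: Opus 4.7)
I would fix a point $s \in S$ and aim to verify the positivity of $H$ at $s$. Let $\sig := \sig_s$ be the monodromy cone and $F := F_s$ the Hodge filtration; by the small Griffiths transversality, $F$ is Griffiths transverse with respect to $\sig$. The Lemma of Section \ref{s:pf} then reduces the task to two conditions: first, producing some $N_0 \in \operatorname{int}(\sig)$ such that $(N_0, F)$ generates a nilpotent orbit; and second, showing that the weight filtration is constant on $\operatorname{int}(\sig)$.

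Next I would exploit the local toric structure of a log modification: locally at $s$, the map $S' \to S$ corresponds to a subdivision of the fan $\sig$, and the fiber $(S')_s$ consists of points $s'_\alpha$ with nilpotent cones $\sig_{s'_\alpha}$ running through the cones of the subdivision; the top-dimensional ones partition $\operatorname{int}(\sig)$ up to common lower-dimensional faces lying inside $\operatorname{int}(\sig)$. Picking $s'$ with $\sig_{s'}$ top-dimensional, the hypothesis that the pullback is a PLH at $s'$ gives that $(\sig_{s'}, F)$ is a nilpotent orbit, so any $N_0 \in \operatorname{int}(\sig_{s'}) \subset \operatorname{int}(\sig)$ supplies the first required datum; on each $\operatorname{int}(\sig_{s'_\alpha})$ with $\sig_{s'_\alpha}$ top-dimensional the weight filtration is constantly the PMHS weight filtration $W_\alpha := W(\sig_{s'_\alpha}, F)$.

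The main step is to verify constancy of the weight filtration on all of $\operatorname{int}(\sig)$. By connectedness of the adjacency graph of the subdivision, this reduces to showing $W_1 = W_2$ for two top-dimensional subcones $\sig_{s'_1}, \sig_{s'_2}$ sharing a codimension-one face $\tau$, together with the statement that for $N \in \operatorname{int}(\sig)$ lying on such a common interior face one still has $W(N) = W_1 = W_2$. For the agreement $W_1 = W_2$, I would invoke the multi-variable $\SL(2)$-orbit theorem of \cite{CKS}: the commuting nilpotents in $\sig_{s'_1}$ and $\sig_{s'_2}$ together with the common $F$ and the two PMHS structures $(W_i, F)$ force the associated relative weight filtrations to coincide, so in particular $W_1 = W_2$. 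For the boundary case within $\operatorname{int}(\sig)$, one argues that the common value $W$ satisfies the PMHS condition for $N \in \tau$ by a limiting argument $N + \epsilon N'$ with $N' \in \operatorname{int}(\sig_{s'_i})$, using that the Jordan/weight structure is compatible with this limit by virtue of both adjacent nilpotent orbits.

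The main obstacle I expect is this CKS-type step: transporting the weight filtration across the adjacent codimension-one faces of the subdivision, by combining two separately given nilpotent orbits with common $F$ into a single compatible PMHS structure. Once this is settled, the Lemma of Section \ref{s:pf} yields immediately that $(\sig, F)$ generates a nilpotent orbit, i.e., that $H$ is a PLH at $s$, completing the proof.
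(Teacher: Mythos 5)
Your reduction via the Lemma of Section \ref{s:pf} is legitimate, and the combinatorial picture of the fibre of a log modification over a log point (strata of the subdivision, with monodromy cones the cones of $\Sigma$) is essentially right; the hypothesis does give you a nilpotent orbit $(\sigma_\alpha,F)$ for every cone $\sigma_\alpha$ of the subdivision. But the central step --- transporting the weight filtration across adjacent cones --- is a genuine gap, not just a technical obstacle. Note what you actually need: if $\tau$ is a common face of two maximal subcones $\sigma_1,\sigma_2$ with $\operatorname{relint}(\tau)\subset\operatorname{int}(\sigma)$, constancy of $W$ on $\operatorname{int}(\sigma)$ forces $M(\sigma_1)=M(\tau)=M(\sigma_2)$, where $M(\tau)$ is the weight filtration of the \emph{proper face} $\tau$ of $\sigma_1$. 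By the relative weight filtration property of nilpotent orbits, $M(\sigma_1)=M(N_1,M(\tau))$ for $N_1\in\operatorname{int}(\sigma_1)$, so you must show this relative weight filtration collapses to $M(\tau)$ itself; this is essentially equivalent to the proposition in the two-cone case, so the argument is circular as stated. Your appeal to ``two PMHS structures with common $F$ force the weight filtrations to coincide'' is not a formal consequence of \cite{CKS}: a mixed Hodge structure is not determined by its Hodge filtration (already for $\dim H=2$, a non-real line $F^1$ supports both a pure weight-$1$ structure and a split weight-$0$ plus weight-$2$ structure), so some input beyond ``common $F$'' is required, and you do not supply it. The subsequent ``limiting argument $N+\epsilon N'$'' for points of $\operatorname{relint}(\tau)$ is likewise unjustified, since $\operatorname{rank}(N^j)$ is only lower semicontinuous and weight filtrations can jump on faces.

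The paper avoids this entire circle of difficulties by a different, more geometric route: reduce to an fs log point $S$, embed it as the origin of its associated toric variety $\tilde S$, extend $H$ canonically to a pre-PLH $\tilde H$ on $\tilde S$ preserving Griffiths transversality, and realize $S'$ as the special fibre of $\tilde S'=\tilde S(\Sigma)\to\tilde S$. The hypothesis gives positivity of the pullback of $\tilde H$ at every point of $S'$; since positivity is an \emph{open} condition for pre-PLH satisfying small Griffiths transversality (\cite[Proposition 2.1]{KNUmixpure}), it holds on a neighborhood of $S'$, and properness of $\tilde S'\to\tilde S$ together with the isomorphism over the trivial locus transfers it to $\tilde S_{\mathrm{triv}}$ near the origin, whence $H$ is a PLH. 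If you want to salvage your direct approach, the missing ingredient is precisely a substitute for this openness statement; as written, the proof does not go through.
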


\begin{pf}
  We may and will assume that $S$ is an fs log point. 
  Let $s$ be its unique point. 
  Let 
$\tilde S:=\{h \in \Hom(M_{S,s}, \bC^{\mathrm{mult}})\,|\,h$ induces the inclusion $\bC^{\times} \to \bC\}$
 be the canonical toric variety associated to $S$ (cf.\ \cite[1.11.1]{KNU6}), where $\bC^{\mathrm{mult}}$ means $\bC$ regarded as a monoid by the multiplication. 
  We identify $S$ with the origin of $\tilde S$. 
  We extend $H$ to a pre-PLH $\tilde H$ over $\tilde S$ canonically (cf.\ 
\cite[4.1.4]{KNU6}). 
  Since $H$ satisfies the Griffiths transversality, $\tilde H$ also satisfies it. 

Take a chart of $\tilde S$ by $\overline M_{S,s}$. 
  By the definition of the log modification, there is a finite rational subdivision $\Sigma$ 
of $\Hom(\overline M_{S,s},\bR_{\ge0})$
such that $S'=S(\Sigma)$. 
  Let $\tilde S'=\tilde S(\Sigma)$
 and we regard $S'$ as 
the special fiber of the morphism $\tilde S' \to \tilde S$ over the origin of $\tilde S$. 
  By assumption, the positivity for the pullback of $\tilde H$ on $\tilde S'$ is satisfied at each point of $S'$. 
  Since the positivity for a pre-PLH satisfying the small Griffiths transversality 
is an open condition (\cite[Proposition 2.1]{KNUmixpure}), the pullback of $\tilde H$ is a PLH on an open neighborhood of $S'$. 
  Since $\tilde S'\to \tilde S$ is proper and $\tilde S'_{\mathrm{triv}} \to \tilde S_{\mathrm{triv}}$ is an isomorphism, this implies that $\tilde H$ satisfies the positivity in the intersection of $\tilde S_{\mathrm{triv}}$ and an open neighborhood of the origin $S$. 
  Hence $H$ is a PLH 
because it is a PH after sufficiently shifts (cf.\ \cite[4.1.4]{KNU6}).
\end{pf}

  As a byproduct of the above proof, we see the following fact. 

\begin{prop}
  Let $S$ be an fs log analytic space and $s$ a point of $S$.
  Then any PLH $H$ on $s$ extends to a PLH on a neighborhood of $s$ in $S$. 
\end{prop}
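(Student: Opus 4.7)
The plan is to run essentially the same argument as in the proof of Proposition \ref{p:blowup}, but starting from a PLH on the single point $s$ rather than a pre-PLH on all of $S$. The key observation is that the toric variety $\tilde S$ associated to the fs log point $s$, together with the canonical extension construction of \cite[4.1.4]{KNU6}, already lives entirely on the $s$-side and is independent of how $s$ sits inside $S$; once we know it is a PLH near the origin of $\tilde S$, we can pull it back to $S$ along a local chart.

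First I would replace $S$ by a small neighborhood of $s$ and choose a chart of $M_S$ by $P:=\overline M_{S,s}$. This produces a strict morphism of fs log analytic spaces $\varphi\colon V\to \tilde S$, where $V$ is a neighborhood of $s$ in $S$ and
$\tilde S=\{h\in\Hom(M_{S,s},\bC^{\mathrm{mult}})\mid h\text{ induces }\bC^{\times}\to\bC\}$
is the canonical toric variety associated to the fs log point $s$, satisfying $\varphi(s)=$ origin of $\tilde S$. Next, by \cite[4.1.4]{KNU6}, the PLH $H$ on $s$ extends canonically to a pre-PLH $\tilde H$ on $\tilde S$; by construction the restriction of $\tilde H$ to the origin is $H$, and $\tilde H$ satisfies the (small) Griffiths transversality because $H$ comes from a nilpotent orbit.

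Now I would invoke openness of positivity: since $\tilde H$ is a PLH at the origin and satisfies small Griffiths transversality, \cite[Proposition 2.1]{KNUmixpure} gives that $\tilde H$ is a PLH on an open neighborhood $U$ of the origin in $\tilde S$. Pulling back along $\varphi$, the pre-PLH $\varphi^{*}\tilde H$ on the open set $\varphi^{-1}(U)\subset V$ is a PLH, and by the canonicity of the extension in \cite[4.1.4]{KNU6} its restriction to $s$ is $H$. This produces the desired extension.

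The only real point to check is that the canonical extension $\tilde H$ and the pull-back $\varphi^{*}\tilde H$ behave as claimed, i.e.\ that $(\varphi^{*}\tilde H)|_{s}=H$ and that small Griffiths transversality is inherited along $\varphi^{*}$; both are formal consequences of the construction in \cite[4.1.4]{KNU6}. Once these are granted, the argument is essentially a verbatim extraction of the last two displayed lines of the proof of Proposition \ref{p:blowup}, which is why the present proposition is a byproduct of that proof.
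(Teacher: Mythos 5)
Your argument is correct and is essentially the paper's own proof: take a chart to reduce to the canonical toric variety of $s$, extend $H$ canonically to a pre-PLH satisfying Griffiths transversality via \cite[4.1.4]{KNU6}, and conclude by the openness of positivity from \cite[Proposition 2.1]{KNUmixpure}. The only difference is that you keep the chart morphism $\varphi$ explicit and pull back, whereas the paper simply identifies a neighborhood of $s$ with the toric variety from the outset.
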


\begin{pf}
  By taking a chart, we may and will assume that $S$ is the canonical toric variety associated to $s$.
  Then $H$ canonically extends to a pre-PLH on $S$ satisfying the Griffiths transversality. 
  Since the positivity is an open condition, it is satisfied on a neighborhood of $s$.
  Hence the extension is a PLH on a neighborhood of $s$. 
\end{pf}

\begin{para}
\label{counter}
We describe a 
correct
 counter example due to K.\ Kato for $(**)$.
Let $H_{\bZ}$ be the free abelian group of rank 6 generated by $e_1, e_2, ...,e_6$. 
  Endow it with the symmetric bilinear form $\langle \cdot, \cdot\rangle$ defined by that for $1 \le j \le k\le 6$, 
$\langle e_j, e_k\rangle$=1 if $(j,k)=(1,5), (2,6)$,
$\langle e_j, e_k\rangle=-1$ if $(j,k)=(3,3), (4,4)$
and $\langle e_j, e_k\rangle=0$ otherwise.
  Let the Hodge filtration $F$ be as 
$F^2=\{0\}$, $F^1=\bC e_5 + \bC e_6$, $F^0 = \bC e_3+\bC e_4+\bC e_5+\bC e_6$, and $F^{-1}=\bC \otimes H_{\bZ}$.
  Let $N_1, N_2$ be the nilpotent endomorphisms on $H_{\bZ}$ whose matrix representations with respect to the canonical base are 
$$N_1=
\begin{bmatrix}
0&0&1&0&0&0 \\
0&0&0&1&0&0 \\
0&0&0&0&1&0 \\
0&0&0&0&0&1 \\
0&0&0&0&0&0 \\
0&0&0&0&0&0
\end{bmatrix}, 
N_2=
\begin{bmatrix}
0&0&1&1&0&0 \\
0&0&1&-1&0&0 \\
0&0&0&0&1&1 \\
0&0&0&0&1&-1 \\
0&0&0&0&0&0 \\
0&0&0&0&0&0
\end{bmatrix},$$
respectively. 

  Then these data generate a pre-PLH of weight 0 over the $\bN^2$-log point whose Hodge numbers are 
$h^{j,k}=2$ if $(j,k)=(-1,1), (0,0), (1,-1)$
and $h^{j,k}=0$ otherwise,
and which satisfies the Griffiths transversality. 

  Let $a, b$ be positive real numbers. 
  Then the weight filtration with respect to $N=aN_1+bN_2$ is characterized as follows: 
$$ \gr^W_{-2}H_{\bQ}=\bQ e_1 + \bQ e_2, \quad
\gr^W_{0}H_{\bQ}=\bQ e_3 + \bQ e_4, \quad 
\gr^W_{2}H_{\bQ}=\bQ e_5 + \bQ e_6, $$
unless $a^2-2b^2=0$.
  But, if $a^2-2b^2=0$, the rank of $N$ is $2$ so that the weight filtration with respect to $N$ is different from the above.
  Hence this pre-PLH is not a PLH because the weight filtration with respect to an interior $N$ of the monodromy cone is not constant. 

  We prove that the pullback with respect to any morphism from the standard log point is a PLH. 
  It is enough to show that for any positive integers $a,b$ and for any real number $c$, the pair $(N, \exp(icN_1)F)$ generates a nilpotent orbit. 
  By \cite[Proposition (4.66)]{CKS}
(applied after an appropriate Tate twist),
it suffices to show that $(W(N), \exp(icN_1)F)$ gives a mixed Hodge structure polarized by $N$. 
  This is reduced to the positivity of 
$\begin{bmatrix} a+b & b \\ b & a-b
\end{bmatrix}^2$.
\end{para}

\noindent Taro Fujisawa

\noindent Tokyo Denki University \\
5 Senju Asahi, Adachi, Tokyo 120-8551 \\ Japan

\noindent fujisawa@mail.dendai.ac.jp

\bigskip

\noindent Chikara Nakayama

\noindent Department of Economics \\ Hitotsubashi University \\
2-1 Naka, Kunitachi, Tokyo 186-8601 \\ Japan

\noindent c.nakayama@r.hit-u.ac.jp
\end{document}